\newcommand{\be}{\begin{equation}}
	\newcommand{\ee}{\end{equation}}
\definecolor{pinegreen}{rgb}{0.0, 0.47, 0.44}
\theoremstyle{definition}
\newtheorem{te}{Theorem}
\newtheorem{prop}{Proposition}
\theoremstyle{remark}
\newtheorem*{os}{Remark}
\title{\bf A note on nonlinear diffusive equations on Poincar\'e half plane} %
\date{}
\author{Roberto~Garra, Francesco~Maltese
}
\begin{document}
	\maketitle

	\begin{abstract}
		In this paper we show some explicit results regarding non-linear diffusive equations on Poincar\'e half plane. We obtain exact solutions by using the generalized separation of variables and we
		also show the meaning of these results in the context of the general theory of the invariant subspace method.
		
		\bigskip
		
		\textit{Keywords:} Nonlinear diffusion equation, Hyperbolic geometry, Exact solutions. 
	\end{abstract}

	\section{Introduction}
	The analysis of exact solutions for nonlinear diffusive equations plays a central role for the applications, for example in the physics of porous media \cite{vaz}.
	There are many different methods to find particular exact interesting solutions for nonlinear PDEs, for example by using the invariant subspace method \cite{gala} or
	the generalized separation of variables \cite{polyanin,polyanin1}.
	In the recent paper \cite{noi}, the authors study the non-linear time-fractional equation
	\begin{equation}\label{1}
		\frac{\partial^\nu }{\partial t^\nu} u (\eta,t)= \Delta_H  u^{n}-u = \frac{1}{\sinh \eta}\frac{\partial}{\partial \eta}\sinh \eta \frac{\partial  u^{n}}{\partial \eta}-u, \quad n>1,
	\end{equation}
involving the time-fractional derivative $\partial^\nu/\partial t^\nu$ in the sense of Caputo. 
   Motivated by this study, here we consider two general class of nonlinear equations in the hyperbolic plane, showing that it is possible to construct interesting 
   exact solutions for these equations by using simple methods.\\
   We first consider a more general formulation of the equation \eqref{1}, that is 
   \begin{equation}\label{2in}
   	\widehat{O}_t  u (\eta,t)= \Delta_H  u^{n}-u = \frac{1}{\sinh \eta}\frac{\partial}{\partial \eta}\sinh \eta \frac{\partial  u^{n}}{\partial \eta}-u, \quad n>1,
   \end{equation}
   where $\widehat{O}_t$ is a linear differential operator acting on the variable $t$. We show that this equation admits elementary exact solutions that can be constructed by separation of
   variables. These results can be proved rigorously by using the invariant subspace method for example. We consider some particular interesting cases in the wide family of nonlinear
   equations \eqref{2in}, when the linear operato $\widehat{O}_t$ is a first order time-derivative or a Laguerre derivative or a fractional derivative. 
   In the second part of the paper we consider the following family of nonlinear equations in the hyperbolic plane
	\begin{equation}
		\widehat{O}_t u (\eta,t) = u\Delta_H u = \frac{u}{\sinh \eta}\frac{\partial}{\partial \eta}\sinh \eta \frac{\partial u}{\partial \eta}.
	\end{equation}
  This case is particularly interesting, since it admits useful non trivial exact solutions.\\
  Essentially there are few investigations about nonlinear fractional equations on the hyperbolic plane. Our main aim is to provide some ideas for the construction of 
  exact solutions for some family of interesting nonlinear equations. By using similar methods, many other problems can be solved.

\section{Construction of exact solutions for nonlinear diffusive equations on Poincar\'e half plane}
\subsection{A first interesting case}
There are many families of nonlinear equations that can be solved by means of the invariant subspace method (see \cite{gala} for the details). As far as we know, there are few studies about the application of this method to solve nonlinear equations in the hyperbolic space. We show the potential utility of this method, starting from a generalization of the time-fractional nonlinear diffusive equation on Poincair\'e half plane studied in \cite{noi}, that is

\begin{equation}\label{3.5}
	\widehat{O}_t u (\eta,t)= \Delta_H  u^{n}-u = \frac{1}{\sinh \eta}\frac{\partial}{\partial \eta}\sinh \eta \frac{\partial  u^{n}}{\partial \eta}-u, \quad n>1 .
\end{equation} 

We have the following result.

\begin{te}
The equation \eqref{3.5} admits a solution of the form	
	\begin{equation}\label{2}
		u(\eta,t) = f(t)\sqrt[n]{c_1\ln(\tanh \frac{\eta}{2})+c_2}, 
	\end{equation}
where $f(t)$ is a solution of the equation
$$ \widehat{O}_t f(t)=-f(t) $$
	and  $c_1$ e $c_2$ are real  constants such that
	$c_1\ln(\tanh \frac{\eta}{2})+c_2>0$.
\end{te}
\begin{proof}
	We observe that	
	
	$$	\frac{\partial}{\partial \eta}\sinh \eta \frac{\partial}{\partial \eta}\bigg[  f(t)\sqrt[n]{c_1\ln(\tanh \frac{\eta}{2})+c_2}\bigg]^{n}= \frac{\partial}{\partial \eta}\sinh \eta \frac{\partial}{\partial \eta}\bigg[ f^{n}(t)(c_1\ln(\tanh \frac{\eta}{2})+c_2)\bigg ]$$

	$$ =f^{n}(t) \frac{\partial}{\partial \eta}\sinh \eta \frac{\partial}{\partial \eta}\bigg[ c_1\ln(\tanh \frac{\eta}{2})+c_2\bigg ]=0$$

	Thus in this case $u(\eta,t)$ belongs to kernel of $\Delta_H$.
	From this observation, we can insert the function $u(\eta,t)$ in the equation \eqref{3.5} and we have 
	
	$$	\widehat{O}_t \bigg( f(t)\sqrt[n]{c_1\ln(\tanh \frac{\eta}{2})+c_2}\bigg)=\frac{1}{\sinh \eta}\frac{\partial}{\partial \eta}\sinh \eta \frac{\partial }{\partial \eta}\bigg[  f(t)\sqrt[n]{c_1\ln(\tanh \frac{\eta}{2})+c_2}\bigg]^{n}-f(t)\sqrt[n]{c_1\ln(\tanh \frac{\eta}{2})+c_2} $$
	
	$$\left(\sqrt[n]{c_1\ln(\tanh \frac{\eta}{2})+c_2}\right) \cdot \widehat{O}_t  f(t)=-f(t)\sqrt[n]{c_1\ln(\tanh \frac{\eta}{2})+c_2} $$
	
	$$\widehat{O}_t  f(t)=-f(t)$$
	By hypothesis on $f(t)$ we obtain the claimed result.
\end{proof}
By means of Theorem 2.1 we can provide an exact solution for \eqref{3.5} in different interesting cases.
For example if $\widehat{O}_t$ coincides with a Caputo fractional derivative of order
$\beta \in (0,1)$ , i.e. in this case we have that 
\begin{equation}
	\widehat{O}_t u(\eta,t) = \frac{\partial^\beta u}{\partial t^\beta}= \frac{1}{\Gamma(1-\beta)}\int_0^t (t-\tau)^{-\beta}\frac{\partial u}{\partial \tau}d\tau,
\end{equation}
 we have that the function
\begin{equation}\label{a}
	u(\eta,t) = E_{\beta}(-t^{\beta})\sqrt[n]{c_1\ln(\tanh \frac{\eta}{2})+c_2},
\end{equation}
is a solution for the non-linear time-fractional equation
\begin{equation}\label{aa}
	\frac{\partial^\beta}{\partial t^\beta} u (\eta,t)= \frac{1}{\sinh \eta}\frac{\partial}{\partial \eta}\sinh \eta \frac{\partial  u^{n}}{\partial \eta}-u, \quad n>1 .
\end{equation} 
In \eqref{a} we denoted by $E_\beta(-t^\beta)$ the one-parameter Mittag-Leffler function, i.e. the eigenfunction of the Caputo fractional derivative (see e.g. \cite{main}).\\
For $\beta = 1 $ we recover the solution 
\begin{equation}\label{abis}
	u(\eta,t) = e^{-t}\sqrt[n]{c_1\ln(\tanh \frac{\eta}{2})+c_2},
\end{equation}
for the classical equation 
\begin{equation}\label{aas}
	\frac{\partial}{\partial t} u (\eta,t)= \frac{1}{\sinh \eta}\frac{\partial}{\partial \eta}\sinh \eta \frac{\partial  u^{n}}{\partial \eta}-u, \quad n>1 .
\end{equation} 

Another interesting case is when $\widehat{O}_t = \frac{\partial}{\partial t}t \frac{\partial}{\partial t}$ that is the so-called Laguerre derivative (see for example \cite{ricci}).
In this case we have that the function

\begin{equation}\label{c0}
	u(\eta,t) = C_0(t)\sqrt[n]{c_1\ln(\tanh \frac{\eta}{2})+c_2},
\end{equation}
is a solution for the equation
\begin{equation}
\frac{\partial}{\partial t}t \frac{\partial}{\partial t} u (\eta,t)= \frac{1}{\sinh \eta}\frac{\partial}{\partial \eta}\sinh \eta \frac{\partial  u^{n}}{\partial \eta}-u, \quad n>1 .
\end{equation} 
In the Equation \eqref{c0} the function $C_0(t)$ is a Bessel-type function, i.e. 
\begin{equation}
	C_0(t) = \sum_{k=0}^\infty \frac{t^k}{k!^2}
\end{equation}

By using this approach we can construct exact solutions for many different nonlinear PDEs involving the hyperbolic Laplacian. For example, an interesting outcome inspired by \cite{calo} is given by the following result
\begin{prop}
	The nonlinear equation
	\begin{equation}\label{0}
		\frac{\partial u}{\partial t}-\frac{i\omega}{\alpha}u = \frac{1}{\sinh \eta}\frac{\partial}{\partial \eta}\sinh \eta \frac{\partial  u^{n}}{\partial \eta},
	\end{equation}
admits the completely periodic separating variable solution
\begin{equation}\label{00}
	u(x,t)= \exp\bigg(\frac{i\omega}{\alpha}t\bigg)\sqrt[n]{c_1\ln(\tanh \frac{\eta}{2})+c_2}.
\end{equation}
\end{prop}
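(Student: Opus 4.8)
The plan is to recognize Proposition 1 as a direct specialization of Theorem 2.1. First I would rewrite equation \eqref{0} in the template form $\widehat{O}_t u = \Delta_H u^n - u$: since the right-hand side of \eqref{0} is $\Delta_H u^n$, I add and subtract $u$ and identify the linear time operator $\widehat{O}_t = \frac{\partial}{\partial t} - \frac{i\omega}{\alpha} - 1$, so that \eqref{0} becomes exactly $\widehat{O}_t u = \Delta_H u^n - u$. With the separated ansatz $u(\eta,t) = f(t)\,g(\eta)$, where $g(\eta) = \sqrt[n]{c_1\ln(\tanh\frac{\eta}{2})+c_2}$, Theorem 2.1 then reduces the whole problem to the single scalar condition $\widehat{O}_t f = -f$.

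The key step, carried out exactly as in the proof of Theorem 2.1, is the kernel property $g^n \in \ker \Delta_H$. Here $g(\eta)^n = c_1\ln(\tanh\frac{\eta}{2})+c_2$, and differentiating gives $\frac{\partial}{\partial \eta}\ln(\tanh\frac{\eta}{2}) = 1/\sinh\eta$; hence $\sinh\eta\,\frac{\partial}{\partial \eta} g^n = c_1$ is constant and $\Delta_H g^n = \frac{1}{\sinh\eta}\frac{\partial}{\partial \eta}(\sinh\eta\,\frac{\partial}{\partial \eta} g^n) = 0$. Because the time factor separates, $\Delta_H u^n = f^n(t)\,\Delta_H g^n = 0$, so the entire nonlinear right-hand side of \eqref{0} vanishes on the ansatz.

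It then remains only to solve the linear first-order ODE $\widehat{O}_t f = -f$, that is $f'(t) - \frac{i\omega}{\alpha}f - f = -f$, which collapses to $f'(t) = \frac{i\omega}{\alpha}f$ and integrates to $f(t) = \exp(\frac{i\omega}{\alpha}t)$ (up to a multiplicative constant that can be absorbed). Substituting back yields precisely \eqref{00}, and a final direct check confirms that both sides of \eqref{0} vanish identically: the left-hand side is $(\frac{\partial}{\partial t} - \frac{i\omega}{\alpha})[\exp(\frac{i\omega}{\alpha}t)\,g(\eta)] = 0$, while the right-hand side is $0$ by the kernel property.

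Since every step is an elementary verification, I do not expect a genuine obstacle; the only real content is the kernel computation, which is already established in Theorem 2.1. The mild points worth noting are that $f$ is complex-valued, so \eqref{00} should be read as a formal (complex) separated solution rather than a classical real one, and that the assumption $c_1\ln(\tanh\frac{\eta}{2})+c_2>0$ must hold on the relevant domain for the $n$-th root to be well defined, exactly as in Theorem 2.1.
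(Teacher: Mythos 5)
Your proposal is correct and complete. It does, however, take a formally different route from the paper: the paper disposes of this proposition in a single sentence (``the proof can be simply obtained by direct substitution''), whereas you derive it as a literal corollary of Theorem 2.1 by absorbing the zeroth-order terms into the time operator, i.e.\ setting $\widehat{O}_t = \frac{\partial}{\partial t} - \left(\frac{i\omega}{\alpha}+1\right)$ so that \eqref{0} becomes exactly $\widehat{O}_t u = \Delta_H u^n - u$, and the theorem's eigenvalue condition $\widehat{O}_t f = -f$ collapses to $f' = \frac{i\omega}{\alpha}f$. This reduction is legitimate, because multiplication by a constant is a linear operation in $t$, and the shifted operator still has the only property the theorem's proof actually uses, namely $\widehat{O}_t\left[f(t)g(\eta)\right] = g(\eta)\,\widehat{O}_t f(t)$. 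What your route buys is uniformity: it exhibits the proposition as an instance of the general theorem rather than an isolated computation, and the same shift trick handles any equation of the form $\frac{\partial u}{\partial t} + cu = \Delta_H u^n$ with constant $c$. What the paper's route buys is brevity: since the entire content is the kernel identity $\Delta_H\left(c_1\ln\tanh\frac{\eta}{2}+c_2\right)=0$ together with a one-line ODE, bare substitution (which you also carry out as a final check) is arguably the shortest honest proof. Your closing caveats --- that $f$ is complex-valued so \eqref{00} is a formal separated solution, and that $c_1\ln\tanh\frac{\eta}{2}+c_2>0$ must hold on the relevant domain for the $n$-th root --- are both apt and are points the paper leaves implicit.
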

The proof can be simply obtained by direct substitution.

\subsection{The second case}

    \begin{te}
    	The nonlinear diffusion equation 
    	\begin{equation}\label{top}
    		\widehat{O}_t u (\eta,t) = \frac{u}{\sinh \eta}\frac{\partial}{\partial \eta}\sinh \eta \frac{\partial u}{\partial \eta}
    	\end{equation}
    admits an explicit solution of the form 
    \begin{equation}
    	u(\eta,t) = f_1(t)\ln(\sinh \eta)+f_2(t)\ln(\tanh \frac{\eta}{2})+f_3(t),
    \end{equation}
where $f_1(t)$ is a solution of the nonlinear equation  
\begin{equation}
	\widehat{O}_t f_1(t) = f_1^2(t),
\end{equation}
while $f_2$ and $f_3$ satisfy the equations
\begin{equation}
		\widehat{O}_t f_2 = f_1 f_2, \quad \widehat{O}_t f_3 =f_1 f_3. 
\end{equation}
    	
    \end{te}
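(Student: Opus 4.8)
The plan is to read this as an application of the invariant subspace method: I would write the proposed solution as a linear combination of three fixed functions of $\eta$ with time-dependent coefficients, namely
$$u(\eta,t)=f_1(t)\,\phi_1(\eta)+f_2(t)\,\phi_2(\eta)+f_3(t)\,\phi_3(\eta), \qquad \phi_1=\ln(\sinh\eta),\ \ \phi_2=\ln(\tanh\tfrac{\eta}{2}),\ \ \phi_3=1,$$
and then show that the span of $\{\phi_1,\phi_2,\phi_3\}$ is invariant under the map $w\mapsto\Delta_H w$. The whole argument reduces to understanding how $\Delta_H$ acts on each $\phi_i$.

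First I would compute $\Delta_H\phi_i$ directly from $\Delta_H w=\frac{1}{\sinh\eta}\frac{\partial}{\partial\eta}\big(\sinh\eta\,w'\big)$. For $\phi_3=1$ this is trivially zero. For $\phi_2=\ln(\tanh\frac{\eta}{2})$ I would reuse the computation already carried out in Theorem~2.1, where the identity $\sinh\eta\,\frac{d}{d\eta}\ln(\tanh\frac{\eta}{2})=1$ gives $\Delta_H\phi_2=0$, so $\phi_2$ lies in the kernel of $\Delta_H$. The only genuinely new calculation is $\Delta_H\phi_1$: from $\frac{d}{d\eta}\ln(\sinh\eta)=\coth\eta$ and $\sinh\eta\,\coth\eta=\cosh\eta$ one obtains $\frac{\partial}{\partial\eta}(\cosh\eta)=\sinh\eta$, whence
$$\Delta_H\phi_1=\frac{1}{\sinh\eta}\,\sinh\eta=1=\phi_3.$$
This single identity is the heart of the proof and shows that the chosen subspace is indeed invariant.

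With these three facts in hand, linearity of $\Delta_H$ immediately yields $\Delta_H u=f_1\,\Delta_H\phi_1+f_2\,\Delta_H\phi_2+f_3\,\Delta_H\phi_3=f_1(t)$, so the nonlinear term collapses to
$$u\,\Delta_H u=f_1\,u=f_1^2\,\phi_1+f_1 f_2\,\phi_2+f_1 f_3\,\phi_3.$$
On the other side, since $\widehat{O}_t$ differentiates only in $t$ while the $\phi_i$ depend only on $\eta$, we have $\widehat{O}_t u=(\widehat{O}_t f_1)\phi_1+(\widehat{O}_t f_2)\phi_2+(\widehat{O}_t f_3)\phi_3$. I would then equate the two expressions and, using the linear independence of $\phi_1,\phi_2,\phi_3$, match the coefficients of each $\phi_i$, which produces exactly the stated system $\widehat{O}_t f_1=f_1^2$, $\widehat{O}_t f_2=f_1 f_2$, $\widehat{O}_t f_3=f_1 f_3$.

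I expect the main obstacle to be nothing more than the verification that $\{\phi_1,\phi_2,\phi_3\}$ is an invariant subspace under $w\mapsto\Delta_H w$ — that is, the computation $\Delta_H\phi_1=1$ — together with a brief check that the three functions are linearly independent, so that equating coefficients is legitimate. Once these two points are secured, the remainder is pure bookkeeping and the claim follows by substitution.
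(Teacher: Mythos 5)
Your proposal is correct and follows essentially the same route as the paper: you verify that $1$ and $\ln(\tanh\frac{\eta}{2})$ lie in the kernel of $\Delta_H$, compute $\Delta_H \ln(\sinh\eta)=1$, substitute the ansatz, and match coefficients to obtain the system $\widehat{O}_t f_1=f_1^2$, $\widehat{O}_t f_2=f_1 f_2$, $\widehat{O}_t f_3=f_1 f_3$. Your explicit framing via the invariant subspace $\langle 1,\ln(\sinh\eta),\ln(\tanh\frac{\eta}{2})\rangle$ and the remark on linear independence merely make precise what the paper states in its own remark following the proof.
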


\begin{proof}
	
We observe that 
\begin{equation}
\frac{\partial}{\partial \eta}\sinh \eta \frac{\partial}{\partial \eta}\bigg[f_2(t)\ln(\tanh \frac{\eta}{2})+f_3(t)\bigg] = 0,
\end{equation}
and 
\begin{equation}
	\frac{f_1(t)\ln(\sinh \eta)}{\sinh \eta}\frac{\partial}{\partial \eta}\sinh \eta \frac{\partial f_1 \ln(\sinh \eta)}{\partial \eta} = f_1^2 \ln(\sinh \eta).
\end{equation}
Therefore, if we search a solution of the form
\begin{equation}\label{sol}
u(\eta,t) = f_1(t)\ln(\sinh \eta)+f_2(t)\ln(\tanh \frac{\eta}{2})+f_3(t)
\end{equation}
	we have, by substitution 
	\begin{equation}
\ln(\sinh \eta)\widehat{O}_t f_1(t)+ \ln(\tanh \frac{\eta}{2})\widehat{O}_t f_2(t)+\widehat{O}_t f_3(t) = f_1^2 \ln(\sinh \eta)+f_1f_2\ln(\tanh \frac{\eta}{2})+f_1f_3.
	\end{equation}
Therefore the function \eqref{sol} is a solution if 
$$\widehat{O}_t f_1(t) = f_1^2, \quad 	\widehat{O}_t f_2 = f_1 f_2, \quad \widehat{O}_t f_3 =f_1 f_3.$$
as stated.
	\end{proof}
\begin{os}
Following the theory of the invariant subspace method, we have used the fact that the equation \eqref{top} admits the invariant subspace $W^3:<1,\ln(\sinh \eta),\ln(\tanh \frac{\eta}{2})>$.
\end{os}

We now consider some interesting particular cases that are related to classical mathematical models.
First of all we consider the classical nonlinear diffusive case
\begin{equation}\label{aaf}
		\frac{\partial u}{\partial t} = \frac{u}{\sinh \eta}\frac{\partial}{\partial \eta}\sinh \eta \frac{\partial u}{\partial \eta}.
\end{equation}
In view of the previous Theorem, we can construt an exact solution by solving the following simple ODEs
$$\frac{d}{dt}f_1(t) = f_1^2, \quad \frac{d}{dt}f_2 =  f_1 f_2, \quad \frac{d}{dt}f_3 = f_1 f_3.$$
Therefore we have that a solution of the equation \eqref{aaf} is given by 
\begin{equation}
u(\eta,t) = \frac{\ln(\sinh \eta)}{t_0-t}+c_1\frac{\ln(\tanh \frac{\eta}{2})}{t_0-t}+\frac{c_2}{t_0-t}.
\end{equation}

\end{document}